\newtheorem{theorem}{Theorem}[section]
\newtheorem{definition}[theorem]{Definition}
\newtheorem{lemma}[theorem]{Lemma}
\newtheorem{assumption}{Assumption}[section]
{\theorembodyfont{\rmfamily} 
\newtheorem{remark}[theorem]{Remark}

}
\newcommand{\myindent}{\hspace{1.5em}}
\newcommand{\Prob}{\operatorname{Prob}}
\newcommand{\real}{\mathbb{R}}
\newcommand{\until}[1]{\{1,\dots, #1\}}
\newcommand{\setdef}[2]{\{#1 \; | \; #2\}}
\newcommand{\map}[3]{#1: #2 \rightarrow #3}
\newcommand{\union}{\operatorname{\cup}}
\newcommand\oprocendsymbol{\hbox{$\square$}}
\newcommand\oprocend{\relax\ifmmode\else\unskip\hfill\fi\oprocendsymbol}
\def\eqoprocend{\tag*{$\square$}}
\title{Algorithms for regional source localization}
\author{Sandra H. Dandach\thanks{The authors are with the Center for
    Control, Dynamical Systems and Computation, University of California at
    Santa Barbara, Santa Barbara, CA 93106, USA,
    \texttt{\{sandra,bullo\}@engineering.ucsb.edu}. This work was supported
    in part by Award AFOSR FA9550-07-1-0528.}  \qquad Francesco Bullo }
\begin{document}
\maketitle%
\begin{abstract}
  In this paper we use the MAP criterion to locate a region containing a
  source.  Sensors placed in a field of interest divide the latter into
  smaller regions and take measurements that are transmitted over noisy
  wireless channels. 
  We propose implementations of
  our algorithm that consider complete and limited communication among
  sensors and seek to choose the most likely hypothesis. Each hypothesis
  corresponds to the event that a given region contains the source.
  Corrupted measurements are used to calculate conditional posteriors. 
We prove that the algorithms asymptotically find the correct
  region almost surely as long as information is available from three or
  more sensors. We also study the geometric properties of the model that
  make it possible in some situations to detect the correct region with a
  unique sensor. Our simulations confirm that the performance of algorithms
  with complete and limited information ameliorates with decreasing noise.
\end{abstract}
\section{Introduction} \label{sec_int}
\subsection{Problem description and motivation}
Source localization has assumed increasing interest, and has been the
subject of study for many researchers. The general setting is that one or
multiple sources lie in a bounded region $C$, and a group of $N$ sensors divide
$C$ into $N$ smaller connected localization regions $W_i$ ,where $i \in \{1,\dots,N\}$. They
measure a received signal strength originating at a source $s$, the
sensors try to cooperatively identify the region $W_i$ containing $s$.
%
We set the problem as a multiple hypothesis decision making problem, where
hypothesis $H_i$ is true if the source lies in the region $W_i$. Maximum a
posteriori estimation (MAP) is used as a decision making tool. We implement
the estimation technique with an all-to-all and a limited communication
algorithm.  The setting of the problem and the proposed solution prove to
have some geometric characteristics that we derive later in the paper.
If properly exploited, these characteristics imply the
possibility of regional localization with a unique sensor for certain
source positions. We also prove almost sure $(a.s.)$ convergence of both our
all-to-all and limited communication algorithms. To the best of our
knowledge, none of the algorithms in the literature provide a similar
convergence result.
\subsection{Literature review}
In the classical setting, a number of sensors collaborate to locate the
exact position of a source. The relation between the position of a source
and the received signal strength (RSS) is described in
\cite{TR:96,JP-MS:01,AS-AT-K:05}. RSS indirectly provides the distance
between the source and a sensor. It is easy to formulate a trivial linear
algorithm that permits localization from the measurements at three sensors.
However, such a linear algorithm
may deliver highly inaccurate estimates of the distances, even when the noise
is small \cite{MC-BA-AM:06}. On the other hand, several authors~\cite{AH-DB:05,MR-RN:04}
treat localization as a nonconvex optimization problem. Gradient descent
algorithms can be used to solve the maximum likelihood estimation problems.
Other approaches include approximating the
nonlinear nonconvex optimization problem by a linear and convex one and
then proposing algorithms for the relaxed problem \cite{CM-ZD-SD:08}.

Following the gradient, and approximating with a linear convex problem have
limitations. The gradient descent can get stuck at local minima far from
the correct position, leading to the choice of wrong regions, even in the
absence of noise \cite{GM-BF-BA:07}.  
Authors in \cite{AH-DB:05}
use a method of projection onto convex sets. A necessary and sufficient
condition for the convergence of this algorithm is that the source lies
inside the convex hull of the sensors.
The limitations of these methods motivated us to look into regional localization.
We note that there
are instances where the location of the region containing a source is all
that is needed. A more detailed listing of the contributions of this paper
is presented below.
\subsection{Contributions}
This paper presents the source localization problem in a setting and
formulation that to the best of our knowledge are new. We present
algorithms based on all-to-all and limited communication that require only
the computation of integrals and therefore present a less computationally
exhaustive alternative to the current solutions to the localization
problem. We also show that as the noise decreases, regional localization can be
accomplished with a unique sensor for certain source positions. We show
through an asymptotic analysis that choosing Voronoi partitions as
localization regions achieves zero probability of error in the two sensors
case. The most important advantage of our formulation is that we are able
to demonstrate the convergence of our algorithms. We provide the proof of
$a.s.$ convergence of our algorithms, a step that tends to be missing
in all of the work presented earlier. Finally, the limited communication
algorithm is promising for the localization problems involving multiple
sources.
\subsection{Paper organization}
The paper proceeds by a problem formulation and an explanation of our
proposed solution in Section~\ref{sec_prob_form}.  In
Section~\ref{sec_properties} we derive some asymptotic geometric properties
of the MAP algorithm when applied to our setting. Section~\ref{sec_map}
introduces the implementation of the algorithms. The analytical proof of
almost sure $(a.s.)$ convergence of our algorithms is presented in
Section~\ref{sec_conv}.  Section \ref{sec_simulations} shows our simulation
results and we conclude in Section \ref{sec_conc}.
\section{Problem formulation}\label{sec_prob_form}
Consider a compact connected environment $C\subset\real^2$.  Suppose there
are $N$ disjoint regions $W_i$, such that $\cup_{i=1}^{N} W_i = C$.
Suppose also that there are $N$ sensors placed at $x_i \in W_i$ and that
the source located at an unknown location $s \in C$, transmits a signal
whose power undergoes lognormal shadowing described below.

The average power loss for an arbitrary Transmitter-Receiver separation is
expressed as a function of distance by using a path loss exponent
$\beta>2$. The power loss is proportional to a power of the distance
between the transmitter and the receiver. For a thorough description of
signal attenuation models over communication channels, we refer the reader
to \cite{TR:96,JP-MS:01,CM-ZD-SD:08}. For reasons to be explained shortly, we
work with a slight modification of the traditional model used in
the literature. This model for the received power at a
sensor $i$ is, $ P_{r_i}=\frac{P d_0}{d_0+\|x_i-s\|^{\beta}}$, where
$\beta$ indicates the rate at which the power loss increases with distance.
$d_0$ is a nominal distance chosen such that the received power in the
vicinity of the source is almost equal to $P$, the transmitted power at the
source.  Note that while this model gets rid of the singularity at the
source, it converges to the same behavior as the classical model used in
communication literature $P_{r_i}=\frac{P_1}{\|x_i-s\|^{\beta}}$, when the
distance $\|x_i-s\|$ is large. Here $P_1$ is the power received at a unit
distance from the source. Taking noise into account in our model, the
received power satisfies
\begin{equation}\label{eq-log-normal}
  \ln P_{r_i} = \ln(P d_0) - \ln (d_0+\|x_i - s \|^{\beta}) +
  n_i,
\end{equation}
where $n_i$ are zero mean, independent and identically distributed (i.i.d)
white gaussian noise with variance $\sigma^2$, each associated with a
sensor $i$. The joint probability density function of the $\ln P_{r}=[\ln
P_{r_1},\dots,\ln P_{r_N}]^T$, conditioned on the hypothesis that the
source is at a point $y \in C$ is given by
\begin{multline} \label{eq-joint_cond}
  p(\ln P_{r_1},\dots,\ln P_{r_N}|y) = \frac{1}{(2 \pi \sigma^2)^{N/2}} \\
  \cdot \exp\!\Big({-\frac{\sum_{i=1}^{N}\left(\ln P_{r_i}-\ln(\frac{P
          d_0}{d_0+\|x_i-y\|^{\beta}})\right)^2}{2 \sigma^2}}\Big).
\end{multline}
Solving for the exact position of the source requires solving for $\hat{y}$
that will maximize the likelihood of having the received observation
which becomes the problem of solving for,
\begin{equation*}
  \hat{y} = \arg\min_y \sum_{i=1}^{N} \left(\ln P_{r_i} - \ln (\frac{P
      d_0}{d_0+||x_i-y||^{\beta}})\right)^2.
\end{equation*}
This is a nonlinear nonconvex optimization problem. Attempts to solve it or
approximate its solution are a topic of great interest. In this paper we
look for a regional localization, so the conditioning on the exact position
$y$ in \eqref{eq-joint_cond} is replaced by a regional conditioning. 
The information exchanged between any two
communicating sensors are: the position of the sensors, the localization
regions associated with each sensor and the logarithms of the received
powers (corrupted with log-normal noise).
Sensors can share information as soon as they make a measurement.
Alternatively when the noise level in the communication channel is known to
be high, it is possible for each sensor to average a set of repeated
measurements and transmit the averaged logarithm of the received power.
Averaging helps decrease the noise variance, and therefore as we expect and
will show later, improves the performance. We start by introducing the case
of one noisy measurement per sensor.
\subsection{Posterior density with a single noisy measurement}
Since we do not know where the source is, we make a worst case assumption
on the knowledge of its position $s$. Specifically, we assume that the
density of $s$ obeys:
\begin{equation*}
  p(s) =
  \begin{cases}
    1/A, & \text{if } s \in C, \\
    0,   & \text{otherwise.}
  \end{cases}
\end{equation*}
Here $A$ is the sum of all the areas $A_j$ of $W_j$, with $j \in \{1,\dots,N\}$.
We need to derive the probability density conditioned on each hypothesis.
\vspace{-0.05in}
\begin{lemma}[Regional conditional density]
  Let $z = [\ln P_{r_1},\dots,\ln P_{r_i}]^{T}$, and note that $P(y \in
  W_j)=P(H_j)=\frac{A_j}{A}$, then
  \begin{equation}\label{eq-regional-cond}
    p(z|y \in W_j) = \frac{1}{A_j} \int_{W_j} p(z|y)dy.
  \end{equation}
\end{lemma}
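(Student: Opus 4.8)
The plan is to derive \eqref{eq-regional-cond} directly from the definition of conditional density together with the uniform prior on $s$. First I would write the joint density of the observation $z$ and the source location $y$ as $p(z,y) = p(z|y)\,p(y)$, where $p(y) = 1/A$ for $y \in C$ by the worst-case assumption on the prior. Then, using the definition of conditioning on the event $\{y \in W_j\}$ (an event of positive probability), I would write
\begin{equation*}
  p(z \mid y \in W_j) = \frac{\int_{W_j} p(z,y)\,dy}{P(y \in W_j)}
  = \frac{\int_{W_j} p(z|y)\,p(y)\,dy}{P(H_j)}.
\end{equation*}

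Next I would substitute the two ingredients already available: $p(y) = 1/A$ on $W_j \subseteq C$, so the numerator becomes $\tfrac{1}{A}\int_{W_j} p(z|y)\,dy$; and $P(H_j) = A_j/A$ as stated in the lemma (which itself follows by integrating the uniform prior over $W_j$, whose area is $A_j$). Dividing, the factors of $1/A$ cancel and one is left with $\frac{1}{A_j}\int_{W_j} p(z|y)\,dy$, which is exactly \eqref{eq-regional-cond}. The density $p(z|y)$ appearing here is the Gaussian joint conditional density \eqref{eq-joint_cond}, so the integral is well defined and finite; one should remark that $p(z \mid y \in W_j)$ is a bona fide density in $z$, which follows by Tonelli's theorem since integrating the right-hand side over $z \in \real^i$ gives $\frac{1}{A_j}\int_{W_j} \big(\int p(z|y)\,dz\big)\,dy = \frac{1}{A_j}\int_{W_j} 1\,dy = 1$.

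There is essentially no obstacle here — the statement is a bookkeeping consequence of Bayes' rule and the uniform prior. The only point requiring a word of care is the justification that conditioning on the positive-probability event $\{y \in W_j\}$ is legitimate and that one may interchange the integral over $W_j$ with the (trivial) normalization, both of which are routine given that $p(z|y)$ is jointly measurable and nonnegative. I would therefore keep the proof to three or four lines, emphasizing only the cancellation of the $1/A$ factor coming from the prior against the $1/A$ hidden in $P(H_j) = A_j/A$.
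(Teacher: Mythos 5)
Your proposal is correct and follows essentially the same route as the paper: both arguments reduce to the definition of conditioning on the positive-probability event $\{y\in W_j\}$, the substitution $p(y)=1/A$, and the cancellation of $1/A$ against $P(H_j)=A_j/A$. The only cosmetic difference is that the paper passes through the conditional CDF $\Prob(Z\leq z,\, y\in W_j)/P(y\in W_j)$ and differentiates in $z$, whereas you marginalize the joint density directly; your added Tonelli normalization check is a harmless bonus.
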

\begin{proof}
  We compute
  \begin{multline*}
    p(z|y \in W_j) = \frac{d}{dz} \frac{\Prob(Z \leq z,y \in
      W_j)}{P(y
      \in W_j)} \\
    = A\frac{d}{dz}\frac{\int_{-\infty}^{z} \qquad \int_{W_j}
      p(z|y)p(y) dy dz}{A_j} \\
    = A\frac{d}{dz}\frac{\int_{-\infty}^{z} \qquad \int_{W_j}
      \big(p(z|y)/A) dy dz}{A_j}
    = \frac{\int_{W_j}p(z|y)dy}{A_j}.
  \end{multline*}
\end{proof}
From~\eqref{eq-joint_cond} and~\eqref{eq-regional-cond}, we obtain
\begin{multline}
  \label{eq-joint_cond_region}
  p(\ln P_{r_1},\dots,\ln P_{r_N}|H_j) P(H_j) = \frac{1}{A} \int_{W_j}
  \frac{1}{(2 \pi
    \sigma^2)^{N/2}} \\
  \cdot \exp\!\Big(-\frac{\sum_{i=1}^{N}\left(\ln P_{r_i}-\ln(\frac{P
        d_0}{d_0+\|x_i-y\|^{\beta}})\right)^2}{2 \sigma^2}\Big) dy.
\end{multline}
Similarly, when measurements from only one sensor are studied.
\subsection{Posterior density with aggregated noisy measurements}
In this setting each sensor is allowed to take $k$ repeated noisy
measurements. Noise independence is assumed between sensors and between
different samples times for each sensor. Then defining
\begin{equation}
  \label{eq-aggregate}
  \textbf{P}_{r_i}(k)= \sum_{l=1}^{k} \frac{P_{r_i}(l)}{k},
\end{equation}
the variance of the noise becomes $\sigma^2 (k) = \frac{\sigma^2}{k}$. The
regional posterior in then becomes
\begin{multline*}
  p(\ln \textbf{P}_{r_i}(k)|H_j) P(H_j) = \frac{1}{A} \int_{ W_j}
  \frac{1}{(2 \pi \sigma^2
    (k))^{1/2}}\\
  \cdot \exp\!\Big(-\frac{\left(\ln \textbf{P}_{r_i}(k)-\ln(\frac{P d_0}{d_0
        +||x_i-y||^{\beta}})\right)^2}{2 \sigma^2 (k)}\Big) dy,
\end{multline*}
and the joint conditional regional posterior becomes:
\begin{multline*}
  p(\ln \textbf{P}_{r_1}(k),\dots,\ln \textbf{P}_{r_N}(k)|H_i) P(H_i) =
  \frac{1}{A} \int_{W_i} dy\\
  \prod_{j=1}^{N} \frac{1}{(2 \pi \sigma^2(k))^{1/2}}
  \exp\!\Big(-\frac{\left(\ln
      \textbf{P}_{r_j}(k)-\ln(\frac{Pd_0}{d_0+||x_j-y||^{\beta}})\right)^2}{2\sigma^2
    (k)}\Big).
\end{multline*}
Note that, as $k~\to~\infty$, the noise variance approaches zero, and the
probability density approaches a delta function.
\begin{remark}\label{Rem-inf-nonoise}
  Let $\delta$ be the Dirac delta function.  In the infinite measurement
  case, $\lim_{k\to\infty}\sigma^2 (k)=0$, and the probability density satisfies
  \begin{align*}
    &p(\ln \textbf{P}_{r_i}|y) \\
    &= \lim_{k \rightarrow \infty} \frac{1}{(2 \pi
      \sigma(k)^2)^{1/2}}\exp\!\Big(-\frac{\left(\ln
        \textbf{P}_{r_i}-\ln(\frac{P
          d_0}{d_0+||x_i-y||^{\beta}})\right)^2}{2 \sigma^2 (k)}\Big) \\
    &= \delta \Big(\ln \textbf{P}_{r_i} - \ln \frac{P
      d_0}{d_0+\|x_i-y\|^{\beta}} \Big). \eqoprocend
  \end{align*}
\end{remark}
\begin{remark}\label{Rem_aggregation}
  In the sequel, for notational simplicity we will treat the aggregated
  measurement case as if it were identical to the single measurement case
  with the caveat that the variance goes to zero. \oprocend
\end{remark}
\subsection{All-to-all information MAP estimation}\label{subsec_CA}
In the all-to-all communication (A2A) case, full information is available. Using the conditional probability in \eqref{eq-joint_cond_region} MAP
selects the hypothesis $H_{i^*}$ according to
\begin{equation}\label{eq-MAP}
i^*= \arg \max_i p(\ln
P_{r_1},\dots,\ln P_{r_n}|H_i) P(H_i).
\end{equation}
Per Remark~\ref{Rem_aggregation},
this selection scheme applies to both the single and the aggregated
measurement cases.

Before we proceed to deriving the results in the next section, we introduce the definition of the Voronoi diagrams.
\vspace{-0.15in}
\begin{definition}[Voronoi Diagrams]\label{Def-Voronoi}
Given $N$ sensors located at positions $\{x_1,\dots,x_N\} \in C$, we define the Voronoi diagram associated with the $i$th sensor, as follows
\begin{equation*}
V_i = \{x \in C : \|x-x_i\| \leq \|x-x_j\|, \forall j \neq i \}.
\end{equation*}
\end{definition}
%
\section{Preliminary properties of regional localization for one and two
  sensors}
\label{sec_properties}
In this section we derive certain geometric properties of MAP estimation as
$k~\to~\infty$ in~\eqref{eq-aggregate}. These geometric properties allow us
to conclude the following two results.  First, for certain source
locations, a single sensor suffices to asymptotically detect the correct
hypothesis.  Second, for the asymptotic detection problem with two sensors,
the selection of Voronoi partitions as localization regions leads to exact
localization.  These $2$ results should be viewed against the fact that,
even in the noise-free case, at least $3$ non-collinear sensors are
needed for exact localization.
In this section we conduct a large sample analysis to prove an interesting
geometric interpretation of the conditional probability densities. This analysis recognizes that when $k~\to~\infty$
in~\eqref{eq-aggregate}, the Gaussian density approaches a Dirac delta
function. Before we state the lemma that captures this property, we
mention a basic property of the Dirac delta function \cite{KH:96}.
\vspace{-0.1in}
\begin{lemma}[On the Dirac delta function]
  If $\map{g}{\real}{\real}$ is differentiable and vanishes at positions
  $x_\gamma$, $\gamma\in\Gamma$, then
  \begin{equation}\label{eq-delta_prop}
    \delta[g(x)]=\sum_{\gamma\in\Gamma} \frac{\delta(x-x_\gamma)}{|g'(x_\gamma)|}.
  \end{equation}
\end{lemma}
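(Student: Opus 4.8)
The plan is to interpret both sides of \eqref{eq-delta_prop} as distributions (generalized functions) and verify that they produce the same value when integrated against an arbitrary test function $\phi$. So I would fix a smooth, compactly supported $\phi:\real\to\real$ and aim to show that $\int_{\real} \delta[g(x)]\,\phi(x)\,dx$ equals $\sum_{\gamma\in\Gamma}\phi(x_\gamma)/|g'(x_\gamma)|$. The very appearance of $|g'(x_\gamma)|$ in the denominator forces the standing (implicit) assumption that $g'(x_\gamma)\neq 0$ for every $\gamma$; I would state this explicitly at the outset, since it is what makes each zero of $g$ isolated.

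First I would localize. Because each $x_\gamma$ is a simple zero and $g$ is continuously differentiable near it, there is an open interval $I_\gamma$ about $x_\gamma$ on which $g$ is strictly monotone, hence a bijection from $I_\gamma$ onto an open neighborhood of $0$; shrinking the intervals if necessary, I may take the $I_\gamma$ pairwise disjoint, and (within the compact support of $\phi$) only finitely many of them meet $\operatorname{supp}\phi$. On the complement of $\bigcup_\gamma I_\gamma$ intersected with $\operatorname{supp}\phi$, the continuous function $g$ is bounded away from $0$, so $\delta[g(x)]$ vanishes there; thus the integral reduces to a finite sum of contributions, one from each $I_\gamma$.

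Next, on each $I_\gamma$ I would perform the change of variables $u=g(x)$, so that $du=g'(x)\,dx$. Writing $x=g^{-1}(u)$ for the local inverse and accounting for orientation (the absolute value absorbs the sign of $g'$ when $g$ is decreasing), one gets $\int_{I_\gamma}\delta(g(x))\,\phi(x)\,dx=\int \delta(u)\,\frac{\phi(g^{-1}(u))}{|g'(g^{-1}(u))|}\,du = \frac{\phi(x_\gamma)}{|g'(x_\gamma)|}$, using the sifting property of $\delta$ at $u=0$ and $g^{-1}(0)=x_\gamma$. Summing over $\gamma$ gives $\sum_{\gamma\in\Gamma}\phi(x_\gamma)/|g'(x_\gamma)|$, which is exactly $\int_{\real}\big(\sum_{\gamma}\delta(x-x_\gamma)/|g'(x_\gamma)|\big)\phi(x)\,dx$. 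Since $\phi$ was arbitrary, the two distributions coincide.

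I expect the only real subtlety — the main obstacle — to be the bookkeeping of the sign of $g'$ in the change of variables, i.e.\ explaining cleanly why the Jacobian factor appears with an absolute value rather than a signed derivative; once orientation is handled correctly the rest is routine. A secondary point worth a sentence is the appeal to isolation of the zeros, which is where the hypothesis $g'(x_\gamma)\neq 0$ (and continuity of $g'$) is genuinely used.
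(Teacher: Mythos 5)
Your argument is the standard and correct one; note, though, that the paper does not actually prove this lemma --- it states it as a known property of the Dirac delta and cites a reference, so there is no in-paper proof to compare against. Your test-function computation (localize to disjoint intervals around the isolated simple zeros, change variables $u=g(x)$, invoke the sifting property, with $|g'|$ arising from orientation) is exactly the textbook derivation, and you are right to flag the implicit hypotheses $g'(x_\gamma)\neq 0$ and continuity of $g'$, which the paper's statement omits. The only point I would press you on is the step where you assert that $\delta[g(x)]$ ``vanishes'' off the intervals $I_\gamma$: since $\delta\circ g$ is not a priori a well-defined distribution, a fully rigorous version should either take the right-hand side of \eqref{eq-delta_prop} as the \emph{definition} of the composition, or define $\delta[g(x)]$ as the limit of $\delta_\varepsilon(g(x))$ for a mollified family $\delta_\varepsilon$ and check that your localization and change of variables commute with that limit; as written this is a (standard, forgivable) gap rather than an error.
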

In keeping with Remark~\ref{Rem_aggregation}, consider the situation where
\begin{multline*}
  p(\ln P_{r_i} | y \in W_j)P(y \in W_j)  \\
  = \frac{1}{A} \int_{W_j} \delta \left( \ln P_{r_i} - \ln \frac{P
      d_0}{d_0+\|x_i-y\|^{\beta}}\right) dy.
\end{multline*}
Define the circle $C(r,x_i) = \setdef{y\in\real^2}{\|y-x_i\|=r}$ and
denote its intersection with the region $W_j$ by $S(W_j,r,x_i)~=~ C(r,x_i)
\bigcap W_j$.  Clearly, this intersection set $S(W_j,r,x_i)$ is the union
of certain arcs of $C(r,x_i)$. Define $\theta(W_j,r,x_i)$ to be the sum of
the angles subtended by these arcs. Call $H_j =\left( y \in W_j \right)$.  If we let $y=[y_1,y_2]^{T}$ and define
\begin{multline*}
  f(y_1,y_2,P_{r_i})= \ln P_{r_i} - \ln P d_0 \\
  + \ln(d_0+((x_{i_1}-y_1)^2+(x_{i_2}-y_2)^2)^{\beta/2}),
\end{multline*}
then
\begin{align*}
\tiny
  p(\ln P_{r_i} | H_j) P(H_j) = \frac{1}{A} \int_{W_j} \delta
  (f(y_1,y_2,P_{r_i})) dy_2 dy_1.
\end{align*}
We are now ready for the following lemma.
\begin{lemma}[The arc-length property]
  \label{L-arcs_property}
  Given a region $W_j$, the conditional probability density satisfies
  \begin{multline*}
    p(\ln P_{r_i}|y \in W_j) P(y \in W_j)\\
    = \mathop{\frac{1}{A}\int}_{W_j} \delta\Big(\ln P_{r_i} - \ln \frac{P
      d_0}{d_0+\|x_i-y\|^{\beta}}\Big) dy_2 dy_1,
  \end{multline*}
  and, if we let $r_i=(\frac{P}{P_{r_i}}-1)^{\beta}$, then
  \begin{equation*}
    p(\ln P_{r_i}|y \in W_j) P(y \in W_j) = \frac{d_0+r_i^{\beta}}{A\beta
      r_i^{\beta-2}} \theta(W_j,r_i,x_i).
\end{equation*}
\end{lemma}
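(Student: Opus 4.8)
The plan is to evaluate the integral $\frac{1}{A}\int_{W_j}\delta(f(y_1,y_2,P_{r_i}))\,dy_2\,dy_1$ by switching to polar coordinates centered at the sensor location $x_i$. Writing $y = x_i + (\rho\cos\phi,\rho\sin\phi)$, the area element becomes $dy_2\,dy_1 = \rho\,d\rho\,d\phi$, and the argument of the delta function reduces to the one-variable function $g(\rho) = \ln P_{r_i} - \ln(Pd_0) + \ln(d_0+\rho^{\beta})$, which no longer depends on $\phi$. The zero of $g$ is the unique $\rho = r_i$ solving $d_0 + \rho^{\beta} = Pd_0/P_{r_i}$, i.e. $\rho^{\beta} = d_0(P/P_{r_i}-1)$; I will reconcile this with the paper's stated substitution $r_i = (P/P_{r_i}-1)^{\beta}$ and adjust constants accordingly (the displayed exponent looks like it should be $1/\beta$, and a factor of $d_0$ may be absorbed — I will carry the algebra carefully so the final prefactor matches). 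Since $g$ is strictly increasing in $\rho$ (as $\beta>2>0$), there is a single root and the Dirac identity of the preceding lemma applies with $|g'(r_i)| = \beta r_i^{\beta-1}/(d_0+r_i^{\beta})$.

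Next I would apply the one-dimensional delta identity $\delta(g(\rho)) = \delta(\rho-r_i)/|g'(r_i)|$ inside the $\rho$-integral. Integrating out $\rho$ collapses the double integral to an integral over those angles $\phi$ for which the point $x_i + (r_i\cos\phi, r_i\sin\phi)$ lies in $W_j$ — this is exactly the set of arcs whose total subtended angle is $\theta(W_j,r_i,x_i)$ by definition. The leftover integrand is the constant $\rho/|g'(\rho)|$ evaluated at $\rho=r_i$, namely $r_i \cdot (d_0+r_i^{\beta})/(\beta r_i^{\beta-1}) = (d_0+r_i^{\beta})/(\beta r_i^{\beta-2})$. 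Multiplying by $\theta(W_j,r_i,x_i)$ and the prefactor $1/A$ yields the claimed formula
\[
  p(\ln P_{r_i}\mid y\in W_j)\,P(y\in W_j) = \frac{d_0+r_i^{\beta}}{A\beta r_i^{\beta-2}}\,\theta(W_j,r_i,x_i).
\]

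The first displayed equation of the lemma is just the specialization of Remark~\ref{Rem-inf-nonoise} together with the regional conditioning of Lemma~\ref{eq-regional-cond} in the $k\to\infty$ limit, so it requires no separate argument beyond citing those. The only genuine subtlety — and the step I expect to be the main obstacle — is the rigorous handling of the delta function on the two-dimensional domain: one must justify that composing $\delta$ with $f$ and then integrating over $W_j$ commutes with the change to polar coordinates, and in particular that restricting to $W_j$ (rather than all of $\real^2$) simply truncates the angular range without affecting the normalization, which hinges on $f$ vanishing transversally (i.e. $g'(r_i)\neq 0$) along the entire circle $C(r_i,x_i)$. I would also note the mild implicit assumptions that $r_i>0$ (so $\rho^{\beta-2}$ is well-defined and nonzero, needing $P_{r_i}<P$) and that the circle $C(r_i,x_i)$ meets $W_j$ in finitely many arcs, which holds whenever $\partial W_j$ is piecewise smooth. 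Finally I would double-check the exponent bookkeeping in the definition of $r_i$ against the $\beta$-th-power/$\beta$-th-root conventions so the stated prefactor is exactly reproduced.
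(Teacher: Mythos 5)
Your proposal is correct, and it takes a genuinely different route from the paper. The paper's appendix proof stays in Cartesian coordinates: it fixes $y_1=a$, applies the one-dimensional delta identity in $y_2$ to pick out the at most two intersection points $y_{2,1}(a)$, $y_{2,2}(a)$ of the vertical line with the circle, computes $1/|f'|$ explicitly, decomposes the sets $\mathcal{C}_j$, $\mathcal{C}'_j$ into intervals, and then integrates $\tfrac{d_0+r_i^{\beta}}{\beta r_i^{\beta-2}}\cdot\tfrac{1}{\sqrt{r_i^2-(x_{i_1}-y_1)^2}}$ over $y_1$ via an arctan antiderivative, finally recognizing the arctan differences as the subtended angles. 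Your polar change of variables centered at $x_i$ shortcuts all of this: the delta's argument depends on $\rho$ alone, it vanishes transversally with $|g'(r_i)|=\beta r_i^{\beta-1}/(d_0+r_i^{\beta})$ uniformly along the whole circle, and integrating out $\rho$ leaves exactly the angular measure of $\{\phi : x_i+r_i(\cos\phi,\sin\phi)\in W_j\}$, i.e.\ $\theta(W_j,r_i,x_i)$, times the correct prefactor. Your route is cleaner and makes the geometric content transparent; it also sidesteps a point the paper glosses over, namely the tangency abscissas $y_1=x_{i_1}\pm r_i$ where the vertical slice meets the circle non-transversally and the Cartesian integrand is singular (though integrable). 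You also correctly flagged the typo in the lemma statement: the appendix's own derivation gives $r_i=\big((P/P_{r_i}-1)d_0\big)^{1/\beta}$, matching your $\rho^{\beta}=d_0(P/P_{r_i}-1)$, not the $(\cdot)^{\beta}$ printed in the statement. The only thing to make explicit when writing this up is the regularity hypothesis you already noted (piecewise-smooth $\partial W_j$ so the circle meets $W_j$ in finitely many arcs), which the paper assumes implicitly as well.
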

The proof of the lemma is provided in the appendix.  This lemma can be
interpreted as follows. Asymptotically, $P_{r_i}$ directly provides the
circle of radius $r_i$ centered at $x_i$ where the sensor is located.
$\theta(W_j,r_i,x_i)$ is simply the angle subtended by the intersection of
this circle with $W_j$. The quantity $p(\ln P_{r_i}|y \in W_j) P(y \in
W_j)$ that is used in MAP is proportional to this angle. We describe
further the significance of this result after
Lemma~\ref{L-two_sensors-Vor}.  In the following lemma, we show that having
Voronoi partitions as well as the MAP estimation algorithm, make the
probability of error zero in the two sensors case.
\vspace{-0.1in}
\begin{lemma}[Optimality of Voronoi for $2$ sensors]\label{L-two_sensors-Vor}
  Consider two points $x_1$ and $x_2$ in $C \subset \real^2$. Let $V_1$ and
  $V_2$ be the Voronoi diagrams associated with $x_1$ and $x_2$.  Take $s \in
  C$ and let $r_1 = \|s-x_1\|$ and $r_2=\|s-x_2\|$.  Then as $k$
  in~\eqref{eq-aggregate} tends to infinity, MAP localization algorithm finds the region containing $s$, with zero probability of error with only
  two sensors.
\end{lemma}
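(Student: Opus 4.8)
The plan is to leverage the arc-length property (Lemma~\ref{L-arcs_property}) to reduce the MAP comparison in the two-sensor case to a comparison of subtended angles, and then to use the geometry of the Voronoi bisector to show that the correct hypothesis always wins. First I would observe that, in the $k\to\infty$ limit and with two sensors, the joint conditional posterior $p(\ln \mathbf P_{r_1},\ln \mathbf P_{r_2}\mid H_j)P(H_j)$ factors (by noise independence across sensors) into an integral over $W_j$ of a product of two delta functions, one centered on the circle $C(r_1,x_1)$ and one on the circle $C(r_2,x_2)$. Because the true source $s$ lies at distance $r_1$ from $x_1$ and $r_2$ from $x_2$, both delta functions are simultaneously supported exactly at the finitely many intersection points of these two circles (generically two points, one of which is $s$). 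Hence the posterior for $H_j$ is nonzero only if $W_j$ contains at least one such intersection point, and its value is a positive weight (coming from the Jacobian factors in Lemma~\ref{L-arcs_property}) counted once for each intersection point lying in $W_j$.

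Next I would argue that the true source $s$ always lies in the interior of its own correct region. Since $W_1=V_1$ and $W_2=V_2$ are the Voronoi cells and $s\in C$ belongs to exactly one of them, say $s\in V_1$ (the case $s\in V_2$ is symmetric), we have $\|s-x_1\|\le\|s-x_2\|$, i.e. $r_1\le r_2$. The two circles $C(r_1,x_1)$ and $C(r_2,x_2)$ intersect in at most two points, and these two points are symmetric with respect to the line through $x_1$ and $x_2$; one of them is $s$. The key geometric step is to show that \emph{both} intersection points lie in the closure of the correct cell $V_1$: a point $p$ on both circles satisfies $\|p-x_1\|=r_1\le r_2=\|p-x_2\|$, which is exactly the defining inequality of $V_1$. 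Therefore the delta-supported set of the product of delta functions is entirely contained in $\overline{V_1}$, so $p(\cdot\mid H_2)P(H_2)=0$ while $p(\cdot\mid H_1)P(H_1)>0$, and MAP selects $H_1$.

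The main obstacle is the handling of the measure-zero degenerate configurations and the boundary cases, which must be dispatched carefully because the argument rests on Dirac deltas. There are three such cases: (i) the two circles are tangent (a single intersection point), which still lies in $\overline{V_1}$ and causes no trouble; (ii) the intersection point(s) land on the Voronoi bisector itself, i.e. $r_1=r_2$, so $s$ is equidistant from both sensors — here the posteriors for $H_1$ and $H_2$ are equal and the tie can be broken arbitrarily, and one notes this event has probability zero since it forces $s$ onto a one-dimensional set; (iii) $x_1=x_2$, which is excluded by the standing assumption that sensors occupy distinct regions. I would also remark that the product-of-deltas manipulation should be justified as the $k\to\infty$ limit of the genuine Gaussian posteriors (Remark~\ref{Rem-inf-nonoise}) rather than worked with formally; concretely, for finite but large $k$ the posterior mass concentrates in shrinking neighborhoods of the circle intersections, and the mass assigned to $V_2$ decays faster than that assigned to $V_1$ whenever $r_1<r_2$, which yields the zero-probability-of-error statement in the limit.
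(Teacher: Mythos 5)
Your proposal is correct and follows essentially the same route as the paper: reduce the $k\to\infty$ posterior to a product of Dirac deltas supported on the two circle intersection points, then observe that every such point is at distance $r_1$ from $x_1$ and $r_2$ from $x_2$ and hence satisfies the defining inequality of the correct Voronoi cell, so the wrong hypothesis receives zero mass. Your treatment of the degenerate cases (tangency, ties on the bisector) is somewhat more explicit than the paper's, which instead only separately notes the case where the second intersection point falls outside $C$, but the core argument is identical.
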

\vspace{-0.2in}
\begin{figure}[h]
  \centering
  \resizebox{.5\linewidth}{!}{\input{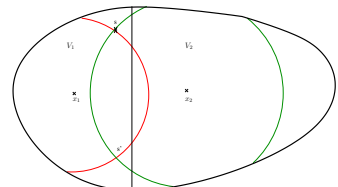}}
  \vspace{-0.1in}
  \caption{\scriptsize This figure shows two nodes $x_1$ and $x_2$, with a source
    $s\in V_1$.}
  \label{fig_two_nodes}
\end{figure}
\begin{proof}
  The proof of this lemma follows the same principle as the proof of Lemma~\ref{L-arcs_property}.
    We point out the differences below. In figure \ref{fig_two_nodes},
  the source $s$ is indicated as one of the two points of intersection of
  the red and green circle.
  The posterior becomes 
$ h(y_1,P_{r_1},x_1,P_{r_2},x_2) =
    \int_{H(a,W_j)} \delta \left(f(a,y_2,P_{r_1},x_1)\right)
    \cdot \delta \left(f(a,y_2,P_{r_2},x_2)\right) dy_2,$
here $P_{r_1}$ and $P_{r_2}$ are the received powers by the sensors.  The integrals are non zero only at the intersections of two sets
which we prove to be nothing but the intersection of the circles in Figure \ref{fig_two_nodes}.
  The definition of Voronoi (Def.~\ref{Def-Voronoi}) implies in the
  two sensors case that if $s' \in C$, then $s \in V_1 \implies s' \in V_1$. In fact $  s \in V_1 \iff \|x_1-s\| \leq \|x_2-s\| \iff \|x_1-s'\| \leq \|x_2-s'\| \iff s' \in V_1,$ if $s' \in C$. The other case to consider is $s' \notin C$. In this case $s$ is the unique point of intersection of the circles in $C$. Both ways $h$ is non-zero only in the correct region.
\end{proof}
The two lemmas presented in this section have interesting implications.
Lemma~\ref{L-arcs_property} implies that, for certain source locations and
as the noise becomes smaller, the MAP estimation algorithm can determine
the correct region containing the source with only one sensor. That is true
when the circle centered at a sensor location with radius $r_i$ is included
in the region $W_j$.  Lemma~\ref{L-two_sensors-Vor} on the other hand,
gives one example where the selection of Voronoi partitions as localization
regions makes it possible to locate the source with only two sensors. This
would not have been the case with two sensors with general convex regions.

In general, the noise will not be vanishing and the decision needs to be
made with a finite number of measurements. We will prove in
Section~\ref{sec_conv} that the algorithms presented in
Section~\ref{sec_map} below converge almost surely as $k~\to~\infty$
in~\eqref{eq-aggregate}.
\section{Decision making algorithms with all-to-all or limited information}
\label{sec_map}
In this section we present two algorithms based on the MAP estimation scheme. The regions can take any shape as long as they are compact. In the limited communication case, sensors can only
talk to their neighbors. We assume there is a communication graph that describes the information exchange among robots. We consider two cases: the all-to-all (A2A) communication case and the limited communication case. In the A2A we adopt the complete undirected communication graph. In the limited communication case we make the following degree assumption: each node has at least two neighbors, i.e., each node appears in at least two edges.
\subsection{All-to-all communication}
In this subsection we present the all-to-all communication algorithm (Algorithm~\#1), where
we apply the classical MAP estimation described in Section~\ref{subsec_CA} on the complete network.
\noindent\rule{1.0\linewidth}{0.001cm} \vspace*{1ex} {\small
 \textbf{Algorithm \#1:} \emph{All-to-all communication MAP}\\
  \textbf{Network:} nodes $\{1,\dots,N\}$ with complete
  communication graph\\
  State of sensor $i$ is $w_i:= \{\ln P_{r_i},x_i,W_i\}$\\
  Sensor $i$ executes
  \begin{algorithmic} [1]
    \STATE \texttt{For} all $j \in \{1,\dots,N\} \setminus \{i\}$
    \STATE \myindent transmit state $w_i$ and receive state $w_j$
    \STATE \myindent calculate $\theta_{j} := p(\ln P_{r_1},\dots,\ln P_{r_N}|H_j)P(H_j)$
    \STATE find $j^{*}:= \arg \max_{j \in \{1,\dots,N\}} \theta_{j}$
    \STATE \textbf{Return:} \texttt{decision} $j^*$
  \end{algorithmic}}
\noindent\rule{1.0\linewidth}{0.001cm}
\vspace{-0.1in}
\begin{assumption}[A2A connectivity and noncollinearity] \label{A-Conn-NonCol-a2a}
We assume that the graph is complete. That is all nodes can communicate with each other. We also assume that at least three sensors in the graph are non-collinear.
\end{assumption}
\subsection{Limited communication}
\label{sub_sec_DA2}
In the limited communication algorithm, each sensor
acquires data from its neighbors and calculates a joint conditional
density \eqref{eq-joint_cond_region}. Each sensor then applies MAP to
choose the most likely hypothesis. In Algorithm~\#2 each sensor adds the
hypothesis of the source being outside its neighborhood, computes the corresponding conditional density and compares it to the densities corresponding to neighboring regions. Let
$\mathcal{N}_i=\{\setdef{j \in \{1,\dots,N\}}{j \textrm{ is a
  neighbor of } i } \union \{i\}\}$ and let $N_i = |\mathcal{N}_{i}|$ be its
cardinality. We described the ``source outside neighborhood'' hypothesis as
hypothesis number $0$ $(H_0)$. We also define $\ln P_{r_{\mathcal{N}_i}}$ to be the vector composed of all measurements $\ln P_{r_i}$ where $i \in \mathcal{N}_i$. Finally, we mention that given an event $H_i = x \in W_i$, the complement event is defined by $\overline{H}_i = x \in W^{C}_i$.
Computing the density in the complement of the neighborhood requires only
the addition of information about the total region. In fact by applying the total probability theorem, we get that
\begin{align*}
\small
   &p(\ln P_{r_j}|H_0) P(H_0)= p(\ln P_{r_j}|\overline{\bigcup_{i \in \mathcal{N}_j} H_i}) P(\overline{\bigcup_{i \in \mathcal{N}_j} H_i})\\
   &=p(\ln P_{r_j|y \in C}) P(y \in C)-\sum_{i \in \mathcal{N}_j} p(\ln P_{r_j}|H_i) P(H_i)
\end{align*}

\noindent\rule{1.0\linewidth}{0.001cm} \vspace*{1ex} {\small
  \textbf{Algorithm \#2:} \emph{Limited communication MAP}\\
  \textbf{Network:} nodes $\{1,\dots,N\}$ with arbitrary
  communication graph\\
  State of sensor $i$ is $w_i:= \{\ln P_{r_i},x_i,W_i\}$\\
  Sensor $i$ executes
  \begin{algorithmic} [1]
    \STATE \texttt{For} all $j \in  \mathcal{N}_i$
    \STATE \myindent transmit state $w_i$ and receive state $w_j$
    \STATE \myindent calculate $\theta_{i,j} := p(\ln P_{r_{\mathcal{N}_i}}|H_j)P(H_j)$
    \STATE \myindent calculate $\theta_{i,0}= p(\ln
    P_{r_{\mathcal{N}_i}}|\; H_0) P(H_0)$
    \STATE find $j^{*}:= \arg \max_{i \in \mathcal{N}_i \union \{0\}} \theta_{i,j}$
    \STATE \textbf{Return:} \texttt{decision} $j^*$
  \end{algorithmic}}
\noindent\rule{1.0\linewidth}{0.001cm}
\begin{assumption}[Limited conn. and non-collinearity] \label{A-Conn-NonCol-lim}
We assume that all nodes can communicate to their neighbors. We also assume that
at least three sensors are non-collinear in each neighborhood.
\end{assumption}
\section{Convergence of algorithms}\label{sec_conv}
In this section we prove that the algorithms presented above give the
correct decision $w.p. 1$ with three or more non-collinear sensors. In
keeping with Remark~\ref{Rem_aggregation}, we examine
\eqref{eq-joint_cond_region} as $\sigma~\to~0^+$. We start by stating the
following result.
\vspace{-0.15in}
\begin{lemma}[Property of non-collinear sensors] \label{L-uniqueness_conc}
  For $d_0>0$ and $\beta>0$, given a source $s\in\real^2$ and three
  non-collinear sensors $x_1$, $x_2$ and $x_3\in\real^2$, define the
  function $\map{f}{\real^2}{\real}$ by $f(z) = \sum_{i=1}^{3}\left( \ln
      \frac{d_0+\|z-x_i\|^{\beta}}{d_0+\|s-x_i\|^{\beta}} \right)^2$.
  The function $f(z)$ vanishes if and only if $z = s$.
\end{lemma}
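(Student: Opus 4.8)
The plan is to use that $f$ is a sum of three nonnegative terms, so that $f(z)=0$ forces each term to vanish, and then to turn the resulting three scalar equations into a linear system whose only solution is $z=s$ once non-collinearity is invoked. First, since $\big(\ln\frac{d_0+\|z-x_i\|^{\beta}}{d_0+\|s-x_i\|^{\beta}}\big)^2\ge 0$ for each $i$, the identity $f(z)=0$ is equivalent to
\[
  d_0+\|z-x_i\|^{\beta} = d_0+\|s-x_i\|^{\beta},\qquad i=1,2,3.
\]
Because $\beta>0$ and $d_0>0$, the map $t\mapsto d_0+t^{\beta}$ is strictly increasing on $[0,\infty)$, so each of these equations is in turn equivalent to $\|z-x_i\|=\|s-x_i\|$. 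This already gives the ``if'' direction, since $z=s$ makes every logarithm equal to $\ln 1=0$.

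For the converse, I would square the three distance equalities and expand the squared norms: $\|z-x_i\|^2=\|s-x_i\|^2$ reads $\|z\|^2-2\langle z,x_i\rangle=\|s\|^2-2\langle s,x_i\rangle$. Subtracting the $i=1$ equation from the $i=2$ and $i=3$ equations cancels the quadratic terms $\|z\|^2$ and $\|s\|^2$ and leaves the two linear conditions
\[
  \langle z-s,\;x_1-x_2\rangle = 0,\qquad \langle z-s,\;x_1-x_3\rangle = 0.
\]
Since $x_1,x_2,x_3$ are non-collinear, the vectors $x_1-x_2$ and $x_1-x_3$ are linearly independent and hence span $\real^2$; a vector orthogonal to a spanning set of $\real^2$ must be $0$, so $z-s=0$, i.e., $z=s$.

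There is no serious obstacle here; the proof is elementary. The two points that deserve a line of justification are the reduction in the first step --- one must note that $t\mapsto d_0+t^{\beta}$ is strictly monotone (which is where $\beta>0$ and $d_0>0$ enter) so that equality of the perturbed distances forces equality of the distances themselves --- and the final step, where the non-collinearity hypothesis is exactly what upgrades ``$z-s$ is orthogonal to two vectors'' to ``$z-s=0$'': geometrically, two circles with distinct centers meet in at most two points, and the third non-collinear center pins down which of those two is the source.
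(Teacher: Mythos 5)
Your proof is correct and follows essentially the same route as the paper's: reduce $f(z)=0$ to the three distance equalities $\|z-x_i\|=\|s-x_i\|$ via nonnegativity of the summands and monotonicity, then expand and subtract to obtain a linear system that non-collinearity renders uniquely solvable. In fact your write-up supplies the details (the orthogonality conditions $\langle z-s,\,x_1-x_2\rangle=\langle z-s,\,x_1-x_3\rangle=0$ and the spanning argument) that the paper's proof only sketches with the phrase ``expanding and subtracting.''
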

\begin{proof}
  In fact, it is easy to check that the sum is zero at $z=s$.  Uniqueness
  of this solution is verified by noting that the sum of the square terms
  is zero only if all the summands are zero. For that to be true one needs
  to find $z=(x,y)$ such that $ ((x-x_{i1})^2+(y-x_{i2})^2)^{\beta/2} =
  ((s_1-x_{i1})^2+ (s_2-x_{i2})^2)^{\beta/2} \doteq (r_{i}^2)^{\beta/2}, $
  for $i \in \{1,2,3\}$. Expanding and subtracting, we obtain that a
  necessary and sufficient condition for
uniqueness  of the solution is that the three points are non-collinear.
\end{proof}
In fact given non-collinear $x_i$, $i \in \{1,\dots,N\}$ with $N \geq 3$, a
compact region $W_j$ and a source $s \notin W_j$, there always
exists
\begin {equation*} 
  D_j = \min_{y \in W_j} \|s-y\| = \operatorname{dist}(W_j,s),
\end{equation*}
such that $D_j > 0$. Also for a compact regions $W_j$, there exists $U_j>0$ such that
\begin{equation} \label{eq-ubound1}
  U_j \geq \max_{y \in W_j,i \in \{1,\dots,N\}} \left|\ln
    \frac{d_0+\|y-x_i\|^{\beta}}{d_0+\|s-x_i\|^{\beta}}\right|.
\end{equation}
Because of the non-collinearity and the fact that $D_j>0$, there
exists an $L_j>0$ such that
\begin{equation} \label{eq-lbound}
  L_j \leq \min_{y \in W_j} \sum_{i=1}^{N}\left( \ln
    \frac{d_0+\|y-x_i\|^{\beta}}{d_0+\|s-x_i\|^{\beta}} \right)^2.
\end{equation}
This follows from the lemma above (i.e., from the fact that the sum has a
unique global minimum at $s$). Define
\begin{equation} \label{eq-delta}
  \eta_j = \sqrt{U_j^2+\frac{L_j}{\alpha N}} - U_j,
\end{equation}
where $\alpha > 1$, then we have the following result.
\begin{lemma} \label{L-sum_bound}
  Consider $L_j$, $U_j$ and $\eta_j$ as defined in~\eqref{eq-ubound1}, \eqref{eq-lbound}
  and~\eqref{eq-delta}. Suppose the source $s$ is not in region $W_j$, and $|n_i| \leq
  \eta_j$ for all $i\in\until{N}$. Then
  \begin{equation} \label{eq-exponent_bound1}
    \frac{\alpha-1}{\alpha}L_j \leq \min_{y \in W_j} \sum_{i=1}^{N} \left( \ln
      \frac{d_0+\|y-x_i\|^{\beta}}{d_0+\|s-x_i\|^{\beta}}+n_i \right)^2.
  \end{equation}
\end{lemma}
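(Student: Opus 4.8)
The plan is an elementary estimate. Fix an arbitrary $y\in W_j$ and abbreviate $c_i=c_i(y)=\ln\frac{d_0+\|y-x_i\|^{\beta}}{d_0+\|s-x_i\|^{\beta}}$, so that \eqref{eq-ubound1} gives $|c_i|\le U_j$ for every $i\in\until{N}$ and \eqref{eq-lbound} gives $\sum_{i=1}^{N}c_i^2\ge L_j$. I would prove the pointwise bound $\sum_{i=1}^{N}(c_i+n_i)^2\ge\frac{\alpha-1}{\alpha}L_j$ for each such $y$; since $W_j$ is compact and the left-hand side depends continuously on $y$ while the $n_i$ are fixed, minimizing over $y\in W_j$ then delivers \eqref{eq-exponent_bound1}.

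The key observation is that the definition \eqref{eq-delta} of $\eta_j$ is precisely the one making the identity $(U_j+\eta_j)^2=U_j^2+\frac{L_j}{\alpha N}$ hold, i.e. $2U_j\eta_j+\eta_j^2=\frac{L_j}{\alpha N}$, so in particular $2U_j\eta_j\le\frac{L_j}{\alpha N}$. One should first note in passing that $\eta_j$ is a genuine positive number: $s\notin W_j$ together with the non-collinearity of the sensors forces $L_j>0$ via Lemma~\ref{L-uniqueness_conc}, and $0<\eta_j<U_j$ follows because $L_j\le\sum_i c_i^2\le NU_j^2$. With this in hand the computation is to expand the square, discard the nonnegative term $\sum_i n_i^2$, and bound the cross term using $|c_i|\le U_j$ and $|n_i|\le\eta_j$:
\begin{align*}
\sum_{i=1}^{N}(c_i+n_i)^2 &= \sum_{i=1}^{N}c_i^2+2\sum_{i=1}^{N}c_i n_i+\sum_{i=1}^{N}n_i^2 \\
&\ge \sum_{i=1}^{N}c_i^2-2\sum_{i=1}^{N}|c_i|\,|n_i| \ \ge\ L_j-2NU_j\eta_j .
\end{align*}
Then $2NU_j\eta_j=N(2U_j\eta_j)\le N\cdot\frac{L_j}{\alpha N}=\frac{L_j}{\alpha}$, so $\sum_{i=1}^{N}(c_i+n_i)^2\ge L_j-\frac{L_j}{\alpha}=\frac{\alpha-1}{\alpha}L_j$, which is the desired pointwise bound.

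Honestly there is no hard part here; the only things to watch are (i) keeping the estimate uniform in $y\in W_j$ before taking the minimum, since the realized noise $n_i$ does not depend on $y$ whereas the $c_i$ do, and (ii) recording that $L_j>0$ — hence $\eta_j>0$ — is where Lemma~\ref{L-uniqueness_conc} and the hypothesis $s\notin W_j$ actually enter. If a marginally sharper intermediate step is preferred, one can instead use the exact per-coordinate minimization $\min_{|m|\le\eta_j}(c_i+m)^2=\max(0,|c_i|-\eta_j)^2\ge c_i^2-(2U_j\eta_j+\eta_j^2)=c_i^2-\frac{L_j}{\alpha N}$ and sum over $i$; this again yields the bound $\frac{\alpha-1}{\alpha}L_j$.
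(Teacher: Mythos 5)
Your proof is correct and follows essentially the same route as the paper: expand the square, bound the cross term by $2NU_j\eta_j$, and use the defining identity $2U_j\eta_j+\eta_j^2=\frac{L_j}{\alpha N}$ to land on $\frac{\alpha-1}{\alpha}L_j$ (the paper keeps the harmless $-N\eta_j^2$ term so the identity applies exactly, whereas you drop $\sum_i n_i^2\ge 0$ and use the inequality $2U_j\eta_j\le\frac{L_j}{\alpha N}$ — an immaterial difference). Your added remarks on the positivity of $L_j$ and $\eta_j$ and on uniformity in $y$ are correct and, if anything, make the argument slightly more complete than the paper's.
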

\begin{proof}
In fact, the sum in \eqref{eq-exponent_bound1} satisfies:
  \begin{align*}
    \sum_{i=1}^{N} & \Big(   \ln \frac{d_0+\|y-x_i\|^{\beta}}{d_0+\|s-x_i\|^{\beta}}+n_i
    \Big)^2 \\
    &\geq L_j - 2 U_j N \eta_j - N \eta_j^2 = L_j + 2 U_j^2 N \\
    &\quad- 2 U_j N
    \sqrt{U_j^2+\frac{L_j}{\alpha N}} -N(U_j^2+\frac{L_j}{\alpha N})
    -NU_j^2 \\
    &\quad+2 N U_j \sqrt{U_j^2+\frac{L_j}{\alpha N}}  =
    \frac{\alpha-1}{\alpha} L_j.
  \end{align*}
\end{proof}

For simplicity of notation, we choose $\alpha = 2$ from here on.  Next, we
introduce the final intermediate lemma before the main results of this
section.
\begin{lemma}[Upper bound for wrong hypothesis] \label{L-exponent_lemma}
  Given $L_j$, $U_j$ and $\eta_j$ as defined in \eqref{eq-ubound1}, \eqref{eq-lbound}
  and \eqref{eq-delta}, define
  \begin{multline*}
    I_j = p(\ln P_{r_1},\dots,\ln P_{r_N}|H_j)P(H_j) = \frac{1}{A (2 \pi \sigma^2)^{N/2}} \\
    \cdot\int_{W_j} \exp\!\Big(-\frac{\sum_{i=1}^{N} \left( \ln P_{r_i}-\ln \frac{P
          d_0}{d_0+\|y-x_i\|^{\beta}} \right)^2}{2 \sigma^2}\Big)  dy.
  \end{multline*}
  If $|n_i| \leq \eta_j$ for $i\in\until{N}$, then for $j\in\until{N}$
  \begin{equation*} 
    I_j \leq \frac{A_j \exp\!\big(-L_j/4 \sigma^2\big)} {A(2 \pi \sigma^2)^{N/2}}.
  \end{equation*}
\end{lemma}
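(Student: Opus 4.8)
The plan is to reduce the exponent in the definition of $I_j$ to exactly the quantity estimated in Lemma~\ref{L-sum_bound}, and then bound the integral by the area of $W_j$ times the (uniform) maximum of the integrand.

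First I would invoke the observation model~\eqref{eq-log-normal}, which gives $\ln P_{r_i} = \ln(Pd_0) - \ln(d_0+\|x_i-s\|^\beta) + n_i$. Substituting this into each summand of the exponent yields the identity
\begin{equation*}
  \ln P_{r_i} - \ln\frac{Pd_0}{d_0+\|y-x_i\|^\beta} = \ln\frac{d_0+\|y-x_i\|^\beta}{d_0+\|s-x_i\|^\beta} + n_i,
\end{equation*}
so that the exponent sum appearing in $I_j$ is identically $\sum_{i=1}^{N}\big(\ln\frac{d_0+\|y-x_i\|^\beta}{d_0+\|s-x_i\|^\beta}+n_i\big)^2$, which is precisely the object on the right-hand side of~\eqref{eq-exponent_bound1}.

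Next, since the hypotheses assumed here ($s\notin W_j$ and $|n_i|\le\eta_j$ for all $i\in\{1,\dots,N\}$) are exactly those of Lemma~\ref{L-sum_bound}, I would apply that lemma with the standing choice $\alpha=2$ to conclude that, for every $y\in W_j$,
\begin{equation*}
  \sum_{i=1}^{N}\Big(\ln\frac{d_0+\|y-x_i\|^\beta}{d_0+\|s-x_i\|^\beta}+n_i\Big)^2 \;\ge\; \frac{L_j}{2}.
\end{equation*}
Hence the integrand of $I_j$ is bounded pointwise on $W_j$ by $\exp(-L_j/4\sigma^2)$. Integrating this constant bound over $W_j$, whose area is $A_j$, and reinstating the normalizing factor $1/(A(2\pi\sigma^2)^{N/2})$ gives $I_j \le A_j\exp(-L_j/4\sigma^2)/(A(2\pi\sigma^2)^{N/2})$, which is the claimed bound.

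I do not anticipate a serious obstacle: the analytic content resides entirely in Lemmas~\ref{L-uniqueness_conc} and~\ref{L-sum_bound} (the non-collinearity/uniqueness argument and the completion-of-the-square estimate), and the present statement is essentially their bookkeeping assembly. The two points deserving a moment of care are (i) carrying the algebraic identity that turns $\ln P_{r_i} - \ln\frac{Pd_0}{d_0+\|y-x_i\|^\beta}$ into $\ln\frac{d_0+\|y-x_i\|^\beta}{d_0+\|s-x_i\|^\beta}+n_i$ correctly, and (ii) noting that the lower bound supplied by Lemma~\ref{L-sum_bound} is uniform over $y\in W_j$, so it may legitimately be pulled out of the integral before integrating.
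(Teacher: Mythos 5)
Your proposal is correct and follows essentially the same route as the paper's own proof: substitute the observation model to rewrite the exponent as $\sum_i\bigl(\ln\frac{d_0+\|y-x_i\|^\beta}{d_0+\|s-x_i\|^\beta}+n_i\bigr)^2$, invoke Lemma~\ref{L-sum_bound} with $\alpha=2$ for the uniform lower bound $L_j/2$, and bound the integral by the constant maximum of the integrand times the area $A_j$. Your explicit remark that the hypothesis $s\notin W_j$ is needed (it is implicit in the definitions of $D_j$ and $L_j$) is a small but welcome clarification.
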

\begin{proof}
  Because of the equality
  \begin{equation*}
    \ln P_{r_i} - \ln \frac{P d_0}{d_0+\|y-x_i\|^{\beta}}= \ln
    \frac{d_0+\|y-x_i\|^{\beta}}{d_0+\|s-x_i\|^{\beta}}+n_i,
  \end{equation*}
  the result directly follows from Lemma~\ref{L-sum_bound} and from the
  fact that the surface integral of a function $f$ is upper bounded by the
  surface integral of a constant function $g$, where $g$ takes
  the maximum value of $f$.
\end{proof}
We are now ready for the convergence theorem. As usual, we define the
$Q$-function $\map{Q}{\real}{\real_{>0}}$ by
\begin{equation*}
  Q(x)=\frac{1}{\sqrt{2\pi}}\int_x^{+\infty} \exp(-y^2/2)dy.
\end{equation*}
\begin{theorem}[Elimination property of wrong hypothesis] \label{theorem_wrong_hypoth}
  Consider $x_i$ non-collinear sensors, $i \in \{1,\dots,N\}$ with $N \geq
  3$.  Let $\sigma$ be the noise variance. Given a source $s \notin W_j$,
  then we have
  \begin{equation*}
    \Prob \biggl[ p(\ln P_{r_1},\dots,\ln P_{r_N}|H_j) P(H_j) \leq
    \epsilon_j(\sigma) \biggr]
    \geq \mu_j(\sigma),
  \end{equation*}
  where
  \begin{equation*}
    \epsilon_j(\sigma) = \frac{A_j \exp(-L_j/4 \sigma^2)}{A (2 \pi
      \sigma^2)^{N/2}}, \quad
    \mu_j(\sigma) = (1-2 Q(\eta_j/\sigma))^{N}.
  \end{equation*}
  Furthermore, as $\sigma\to0^+$, we have $\epsilon_j(\sigma)~\to~0^+ \quad\text{and}\quad
    \mu_j(\sigma)~\to~1^-$.
\end{theorem}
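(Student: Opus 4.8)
The plan is to read off the statement as the combination of the deterministic bound in Lemma~\ref{L-exponent_lemma} with an elementary Gaussian tail estimate. First I would record the key implication already contained in Lemma~\ref{L-exponent_lemma}: for a \emph{fixed} noise realization $(n_1,\dots,n_N)$, if $|n_i|\le\eta_j$ for every $i\in\until{N}$, then $I_j = p(\ln P_{r_1},\dots,\ln P_{r_N}|H_j)P(H_j) \le \epsilon_j(\sigma)$, where $\epsilon_j(\sigma)$ is exactly the right-hand side in that lemma. Therefore, as events, $\{\,|n_i|\le\eta_j \text{ for all } i\,\}\subseteq\{\,I_j\le\epsilon_j(\sigma)\,\}$, and monotonicity of probability gives $\Prob[\,I_j\le\epsilon_j(\sigma)\,]\ \ge\ \Prob[\,|n_i|\le\eta_j \text{ for all } i\,]$.

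Next I would compute this lower bound. Since the $n_i$ are i.i.d.\ zero-mean Gaussian with variance $\sigma^2$, for each $i$ we have $\Prob[|n_i|>\eta_j]=2Q(\eta_j/\sigma)$ by symmetry of the density and the definition of $Q$, hence $\Prob[|n_i|\le\eta_j]=1-2Q(\eta_j/\sigma)$. Independence across sensors then yields $\Prob[\,|n_i|\le\eta_j \text{ for all } i\,]=(1-2Q(\eta_j/\sigma))^{N}=\mu_j(\sigma)$, which establishes the claimed inequality $\Prob[\,I_j\le\epsilon_j(\sigma)\,]\ge\mu_j(\sigma)$.

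For the asymptotics as $\sigma\to0^+$, the point I would emphasize is that $L_j$, $U_j$, and therefore $\eta_j=\sqrt{U_j^2+L_j/(2N)}-U_j$ (taking $\alpha=2$ as fixed in the text) are strictly positive constants depending only on the geometry of the sensors, the region $W_j$, and the source $s\notin W_j$, and in particular are independent of $\sigma$; positivity of $L_j$ comes from Lemma~\ref{L-uniqueness_conc} via \eqref{eq-lbound}, and positivity of $\eta_j$ follows from $L_j>0$. Consequently $\eta_j/\sigma\to+\infty$, so $Q(\eta_j/\sigma)\to0^+$ and $\mu_j(\sigma)=(1-2Q(\eta_j/\sigma))^{N}\to1^-$, the limit being approached from below because $Q>0$ everywhere. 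For $\epsilon_j$, the exponential factor $\exp(-L_j/4\sigma^2)$ decays faster than the polynomial blow-up of $(2\pi\sigma^2)^{-N/2}\sim\sigma^{-N}$, so $\epsilon_j(\sigma)\to0^+$, from above since $\epsilon_j(\sigma)>0$ for all $\sigma>0$.

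I do not expect a substantive obstacle: the analytic content is already packaged in Lemmas~\ref{L-sum_bound} and~\ref{L-exponent_lemma}. The only points requiring care are stating explicitly that $\eta_j$ does not depend on $\sigma$ (otherwise the tail bound would not converge to $1$) and identifying the side from which each limit is attained.
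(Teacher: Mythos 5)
Your proposal is correct and follows essentially the same route as the paper: the event inclusion $\{|n_i|\le\eta_j\ \forall i\}\subseteq\{I_j\le\epsilon_j(\sigma)\}$ via Lemmas~\ref{L-sum_bound} and~\ref{L-exponent_lemma}, the product form of the Gaussian tail bound giving $\mu_j(\sigma)$, and the limits from $\eta_j,L_j>0$ being independent of $\sigma$. Your treatment of the limits is in fact slightly more explicit than the paper's, which simply calls them immediate.
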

\begin{proof}
  From Lemmas~\ref{L-sum_bound} and~\ref{L-exponent_lemma} , we have that
  \begin{align*}
    \Prob&\big[ p(\ln P_{r_1},\dots,\ln P_{r_N}|H_j) P(H_j) \leq
    \epsilon_j(\sigma) \big] \\
    &\geq \Prob \left[ [n_1,\dots,n_N]^{T} \in [-\eta_j,\eta_j]^{N} \right] \\
    &= \prod_{i=1}^{N} \biggl( \frac{1}{2}-\Prob[n_i>\eta_j]
    +\frac{1}{2}-\Prob[n_i<-\eta_j] \biggr)
    \\
    &= \big( 1-2Q(\eta_j/\sigma) \big)^{N}.
  \end{align*}
  The first inequality comes from the fact that Lemmas~\ref{L-sum_bound}
  and~\ref{L-exponent_lemma} hold whenever all $|n_i| \leq \eta_j$. The
  proofs of the two limits of $\epsilon_j$ and $\mu_j$ are immediate.
\end{proof}
This theorem states that, as $\sigma~\to~0^+$, the probability that the
joint density function $p(\ln P_{r_1},\dots,\ln P_{r_N}|H_j) P(H_j)$ takes
an arbitrarily small value goes arbitrarily close to $1$ when $H_j$ is not
the correct hypothesis.
This is so as $Q(x)~\to~0$ as $x~\to~\infty$.  To complement the
Theorem~\ref{theorem_wrong_hypoth}, we prove below that for
the \emph{correct} hypothesis, the probability density will be lower
bounded by a positive term $w.p.1$.
\begin{theorem}[Strict positivity for correct hypothesis]
  \label{theorem_correct_hypothesis}
  Consider $x_i$ non-collinear sensors, $i \in \{1,\dots,N\}$ with $N \geq
  3$.  Let $\sigma$ be the noise variance. If $s \in W_i$, then we have
  \begin{equation*}
    \Prob \left[ p(\ln P_{r_1},\dots,\ln P_{r_N}|H_i) P(H_i) \geq
      \Psi(\sigma) \right] \geq \Omega(\sigma),
  \end{equation*}
  where
  \begin{align*}
    &\Psi(\sigma) = p(\ln P_{r_1},\dots,\ln P_{r_N}) -
    \sum_{\substack{{j=1,\dots,N} \\{j \neq i}}} \frac{A_j \exp(-L_j/4
      \sigma^2)}{A (2 \pi  \sigma^2)^{N/2}}, \\
    &\Omega(\sigma) = \prod_{\substack{{j=1,\dots,N} \\{j \neq i}}}
     \mu_j(\sigma) =\prod_{\substack{{j=1,\dots,N} \\{j \neq i}}} (1-2
    Q(\eta_j/\sigma))^{N}.
  \end{align*}
  Furthermore, as $\sigma~\to~0^+$, we have $\Psi(\sigma)~\to~p(\ln P_{r_1},
  \dots,\ln P_{r_N})>0  \quad\text{and}\quad
    \Omega(\sigma)~\to~1^-$.
\end{theorem}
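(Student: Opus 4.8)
The plan is to express the correct-hypothesis posterior as the total density minus the wrong-hypothesis posteriors and then apply Theorem~\ref{theorem_wrong_hypoth} to every one of those wrong-hypothesis terms simultaneously. First I would invoke the total probability theorem over the partition $\{W_j\}_{j=1}^N$ of $C$ (the source lies in $C$ with probability one): writing $I_j = p(\ln P_{r_1},\dots,\ln P_{r_N}|H_j)P(H_j)$ as in Lemma~\ref{L-exponent_lemma},
\begin{equation*}
  p(\ln P_{r_1},\dots,\ln P_{r_N}) = \sum_{j=1}^{N} I_j,
  \qquad\text{hence}\qquad
  I_i = p(\ln P_{r_1},\dots,\ln P_{r_N}) - \sum_{j\neq i} I_j .
\end{equation*}
Since $s\in W_i$ and the regions are disjoint, $s\notin W_j$ for every $j\neq i$, so each $I_j$ with $j\neq i$ is precisely the quantity controlled by Lemmas~\ref{L-sum_bound}--\ref{L-exponent_lemma} and Theorem~\ref{theorem_wrong_hypoth}.

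Next, for $j\neq i$ let $E_j = \{\,|n_k|\le\eta_j \text{ for all } k\in\{1,\dots,N\}\,\}$; by Lemma~\ref{L-exponent_lemma}, on $E_j$ we have $I_j\le\epsilon_j(\sigma)$ with $\epsilon_j(\sigma)=A_j\exp(-L_j/4\sigma^2)/(A(2\pi\sigma^2)^{N/2})$. On the joint event $E=\bigcap_{j\neq i}E_j$ all these bounds hold at once, so
\begin{equation*}
  p(\ln P_{r_1},\dots,\ln P_{r_N}|H_i)P(H_i) \ge p(\ln P_{r_1},\dots,\ln P_{r_N}) - \sum_{j\neq i}\epsilon_j(\sigma) = \Psi(\sigma),
\end{equation*}
and therefore $\Prob[\,p(\ln P_{r_1},\dots,\ln P_{r_N}|H_i)P(H_i)\ge\Psi(\sigma)\,]\ge\Prob[E]$. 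It then remains to show $\Prob[E]\ge\Omega(\sigma)$. Here I would note that the $E_j$ are totally ordered by the magnitude of $\eta_j$, i.e. nested, so $E$ equals the $E_j$ with the smallest $\eta_j$ and $\Prob[E]=\min_{j\neq i}(1-2Q(\eta_j/\sigma))^N=\min_{j\neq i}\mu_j(\sigma)$; since every factor lies in $(0,1)$, this minimum dominates $\prod_{j\neq i}\mu_j(\sigma)=\Omega(\sigma)$, which gives the claimed inequality. (Alternatively, each $E_j$ is an increasing event in $-|n_1|,\dots,-|n_N|$, hence the $E_j$ are positively associated and $\Prob[E]\ge\prod_{j\neq i}\Prob[E_j]=\Omega(\sigma)$ directly.)

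For the asymptotics as $\sigma\to0^+$, recall from Lemma~\ref{L-uniqueness_conc} and the non-collinearity that $L_j>0$, so $\eta_j=\sqrt{U_j^2+L_j/(2N)}-U_j>0$; thus $Q(\eta_j/\sigma)\to Q(+\infty)=0$, giving $\Omega(\sigma)\to1$, and since $Q>0$ on $\real$ every factor is $<1$ for each $\sigma>0$, so the convergence is from below, $\Omega(\sigma)\to1^-$. Likewise $\epsilon_j(\sigma)\to0$ because the exponential $\exp(-L_j/4\sigma^2)$ overwhelms the polynomial factor $\sigma^{-N}$ when $L_j>0$, so $\Psi(\sigma)\to p(\ln P_{r_1},\dots,\ln P_{r_N})$, and this limit is strictly positive since $p(\ln P_{r_1},\dots,\ln P_{r_N})\ge I_i>0$, the integrand defining $I_i$ being strictly positive and $W_i$ having positive area $A_i$. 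The one step that needs care is handling the $N-1$ wrong-hypothesis events jointly on a single noise realization rather than one at a time; once their nestedness (or positive association) is observed, the product bound $\Omega(\sigma)$ drops out and the rest is bookkeeping already done in the preceding lemmas.
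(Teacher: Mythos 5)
Your proposal is correct and follows essentially the same route as the paper: the total probability decomposition $p(z|H_i)P(H_i)=p(z)-\sum_{j\neq i}p(z|H_j)P(H_j)$ followed by an application of Theorem~\ref{theorem_wrong_hypoth} to each wrong-hypothesis term and the limits $\epsilon_j(\sigma)\to 0^+$, $\mu_j(\sigma)\to 1^-$. If anything you are more careful than the paper on the one delicate step, namely lower-bounding the probability of the \emph{joint} event by the product $\prod_{j\neq i}\mu_j(\sigma)$: the paper asserts this product bound directly, whereas you justify it by observing that the events $E_j=\{|n_k|\le\eta_j\ \forall k\}$ are nested, so $\Prob\bigl[\bigcap_{j\neq i}E_j\bigr]=\min_{j\neq i}\mu_j(\sigma)\ge\prod_{j\neq i}\mu_j(\sigma)$.
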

\begin{proof}
  The proof of this theorem follows directly from
  Theorem~\ref{theorem_wrong_hypoth} and the total probability theorem.
  Call $~z=~[\ln P_{r_1},\dots,\ln P_{r_N}]^T$. We know from the total
  probability theorem that
  \begin{align*}
    p(z) &=\sum_{j=1}^{N} p(z|H_j)P(H_j) \\
    &= p(z|H_i) P(H_i)
    + \sum_{\substack{{j=1,\dots,N} \\{j \neq i}}} p(z|H_j)P(H_j)
  \end{align*}
  and, in turn, that
  \begin{equation*}
    p(z|H_i) P(H_i) = p(z) - \sum_{\substack{{j=1,\dots,N}
        \\{j \neq i}}} p(z|H_j)P(H_j).
  \end{equation*}
  From Theorem~\ref{theorem_wrong_hypoth}
  \begin{align*}
    &\Prob \left[ p(z|H_i) P(H_i) \geq
      p(z) -\sum_{\substack{{j=1,\dots,N} \\{j \neq i}}} \epsilon_j(\sigma)
       \right] \nonumber  \\
    &\geq \prod_{\substack{{j=1,\dots,N} \\{j \neq i}}} \Prob
     \biggl[ p(z|H_j) P(H_j) \leq \epsilon_j(\sigma) \biggr]
    \geq \prod_{\substack{{j=1,\dots,N} \\{j \neq i}}} \mu_j(\sigma)
\end{align*}
As $\sigma~\to~0^+$, $\Psi(\sigma)~\to~p(z)$ and $\Omega(\sigma)~\to~1^-$.
\end{proof}
Theorem~\ref{theorem_correct_hypothesis} complements
Theorem~\ref{theorem_wrong_hypoth} in that is shows that as $\sigma~\to~0^+$,
the probability density conditioned on the correct hypothesis is
lower bounded by a strictly positive term. This event happens
asymptotically with probability $1$.

Under Assumption~\ref{A-Conn-NonCol-a2a}, as MAP follows \eqref{eq-MAP},
Theorems~\ref{theorem_correct_hypothesis} and~\ref{theorem_wrong_hypoth}
 complete the proof of $a.s$ convergence of the all-to-all communication
  MAP algorithm. Similarly for the limited communication, under
  Assumption~\ref{A-Conn-NonCol-lim}, MAP estimation converges almost
  surely when applied to regional localization.
\section{Simulations} \label{sec_simulations}
In this section we show simulation results illustrating the type of
decision obtained by our algorithms. We also show a comparison plot between
the all-to-all and the limited information algorithms.
\vspace{-0.1in}
\begin{figure}[h]
  \centering
  \includegraphics[scale=0.12]{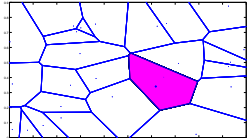}
    \vspace{-0.2in}
  \caption{\scriptsize This figure shows the all-to-all communication algorithm with 25
    sensors and one source whose region was correctly detected.}
  \label{fig_cent_corr}
\end{figure}
The plot in Figure \ref{fig_cent_corr} shows a correct detection of the
source. The shaded region corresponds to the one detected by the algorithm,
the source is shown as a star and the sensors as the dots.
\begin{figure}[thpb]
  \centering
  \includegraphics[scale=0.17]{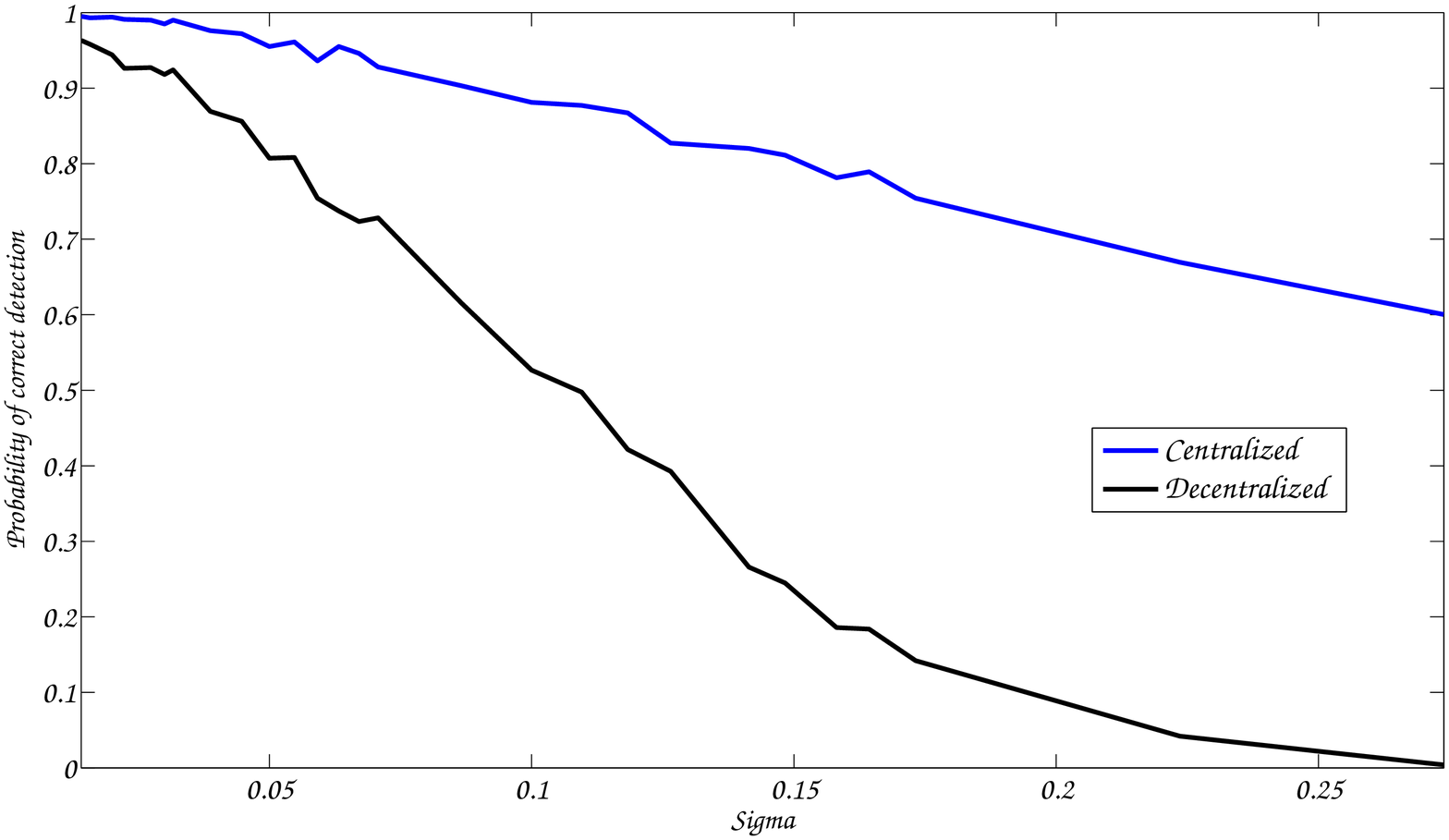}
  \vspace{-0.1in}
  \caption{\scriptsize This figure shows the probability of correct decision making of
    the all-to-all and limited communication cases over $1000$ runs, with $20$ sensors.
    The blue curve corresponds to the A2A communication algorithm,
    and the black curve corresponds to the limited
     communication algorithm (with the majority vote).}
  \label{fig_cent_null}
\end{figure}
Figure \ref{fig_cent_null} shows the results obtained from batches of
$1000$ runs, with $N=20$ sensors. The plots are obtained as follows: If
$d_j$ is the index of the region chosen by sensor $j$, we compare it to the
correct index $d^*$. For the comparison to be fair, and since the
limited communication algorithm makes decisions only about its neighborhood,
we compare the two algorithms by adding a communication round to the limited communication algorithm, where
we look at all the sensors that decided that the source is in their neighborhood, noting that
the decisions could be inconsistent, we run a majority vote among the aforementioned sensors, and
compare the final decision to the correct one.
The black curve
in Figure~\ref{fig_cent_null} represents the
limited communication decisions, while the blue curve represents the
all-to-all communication decision. Figure~\ref{fig_cent_null} shows the
probability of making a correct decision as $\sigma$
increases. 
It is not surprising that as the noise variance increases the
probability of correct decision making decreases.
%
\section{Conclusion} \label{sec_conc}
In this paper we have presented an all-to-all and a limited communication
algorithm based on MAP that succeed in identifying a region that contains a
source.  We have also presented an asymptotic analysis and derived some
geometric properties of our algorithms. Those properties had the implication that
for certain source positions, it is possible to solve the regional
localization problem with a unique sensor. We showed that choosing Voronoi
partitions for localization regions asymptotically achieves zero
probability of error in the two sensors case. We were also able to prove that
when readings from three or more non-collinear sensors are available, the algorithms
choose the correct region almost surely. As an extension to this work we are studying
 how to optimally position the sensors, and choose the localization regions,
 so that we minimize the probability of error of our algorithms.
\vspace{-0.15in}
\bibliographystyle{ieeetr}
\bibliography{FB,alias,New,Main,Sandra}

\appendix
\begin{proof}[Proof of Lemma~\ref{L-arcs_property}]
  Let
  \begin{equation*}
    H(a,W_j) = \left( y_2 \in \real \textrm{ such that given } y_1=a,
      [a,y_2] \in W_j \right)
  \end{equation*}
  \vspace{-0.1in}
  \begin{multline}\label{eq-hy1}
    h(y_1,P_{r_i},x_i) =\\
    \int_{H(a,W_j)}\delta\left(\ln P_{r_i} - \ln
      \frac{P d_0}{d_0+ \|x_i-y\|^{\beta}}\right)dy_2\\
    = \int_{H(a,W_j)} \delta \left(f(a,y_2,P_{r_i},x_i)\right) dy_2.
  \end{multline}
Since \hspace{-0.3in}
\begin{multline*}
\frac{d}{d y_2} f(y_1,y_2,P_{r_i},x_i) = f'(y_1,y_2,P_{r_i},x_i)\\
= \frac{\beta}{2} \cdot 2 \cdot (-1) \cdot  (x_{i_2}-y_2)
\frac{\left((x_{i_1}-y_1)^2+(x_{i_2}-y_2)^2\right)^{\frac{\beta}{2}-1}}{d_0
+\left((x_{i_1}-y_1)^2+(x_{i_2}-y_2)^2\right)^{\frac{\beta}{2}}}
\end{multline*}
If we fix $y_1 = a$, we can solve for $y_2(a)$ such that,
$f(a,y_2(a),P_{r_i},x_i)=0$. In fact
\begin{multline} \label{eq-circle_eq}
f(a,y_2(a),P_{r_i},x_i) = 0  \\
\Leftrightarrow \ln P_{r_i}-\ln \frac{P
d_0}{d_0+\left((x_{i_1}-a)^2+(x_{i_2}-y_2(a))^2\right)^{\frac{\beta}{2}}}= 0 \\
 \Leftrightarrow (x_{i_1}-a)^2+(x_{i_2}-y_2(a))^2 =
\left(\frac{P-P_{r_i}}{P_{r_i}}d_0\right)^{\frac{2}{\beta}} = r_i^2,
\end{multline}
where $r_i =
\left((\frac{P}{P_{r_i}}-1)d_0\right)^{\frac{1}{\beta}}$. Observe
$H(a,W_j)$ has at most two elements satisfying equation
\eqref{eq-circle_eq}, one or both of:
\begin{equation}
y_{2,1}(a) = x_{i_2} - \sqrt{r_i^2-(x_{i_1}-a)^2} \label{eq-y2_a}
\end{equation}
or,
\begin{equation}
y_{2,2}(a) = x_{i_2} + \sqrt{r_i^2-(x_{i_1}-a)^2},\label{eq-y'2_a}
\end{equation}
whenever $r_i^2 \geq (x_{i_1}-a)^2$. Using property
\eqref{eq-delta_prop} of the dirac delta function, and substituting
with $y_{2,1}(a)$ and $y_{2,2}(a)$ obtained in \eqref{eq-y2_a} and
\eqref{eq-y'2_a}, \eqref{eq-hy1} becomes:
\begin{equation*}
h(a,P_{r_i},x_i) = \int_{H(a,j)} \delta(f(a,y_2,P_{r_i}),x_i) dy_2,
\end{equation*}
takes the values
\begin{displaymath}
\left\{
\begin{array}{ll}
\int_{H(a,W_j)}
\frac{\delta(y_2-y_{21}(a))}{|f'(a,y_2,P_{r_i},x_i)|} dy_2 \\
\textrm{if $y_{2,1}(a) \in H(a,W_j)$ but $y_{2,2}(a) \notin H(a,W_j)$} \\\int_{ H(a,W_j)}\frac{\delta(y-y_{2,2}(a))}{|f'(a,y_2,P_{r_i},x_i)|}
\ dy_2 \\ \textrm{if $y_{2,2}(a) \in H(a,W_j)$ but
$y_{21}(a) \notin H(a,W_j)$} \\
\int_{H(a,W_j)}
\frac{\delta(y_2-y_{2,1}(a))}{|f'(a,y_2,P_{r_i},x_i)|} +
\frac{\delta(y_2-y_{2,2}(a))}{|f'(a,y_2,P_{r_i},x_i)|} \ dy_2 \\ \textrm{if both $y_{2,1}(a)$ and $y_{2,2}(a) \in H(a,W_j)$}
\end{array} \right.
\end{displaymath}
Define $I_1(a,W_j)$, the indicator function satisfying
\begin{displaymath}
I_1(a,W_j) =\left\{ \begin{array}{ll}
1 & \textrm{if $y_{2,1}(a) \in H(a,W_j)$}  \\
0 & \textrm{otherwise} \end{array} \right.
\end{displaymath}
Similarly define $I_2(a,W_j)$, the indicator function satisfying
\begin{displaymath}
I_2(a,W_j) =\left\{ \begin{array}{ll}
1 & \textrm{if $y_{2,2}(a) \in H(a,W_j)$} \\
0 & \textrm{otherwise} \end{array} \right.
\end{displaymath}
Then, \eqref{eq-hy1} becomes
\begin{multline*}
h(a,P_{r_i},x_i)= \frac{1}{|f'(a,y_{2,1}(a),P_{r_i},x_i)|}I_1(a,W_j) \\
+\frac{1}{|f'(a,y_{2,2}(a),P_{r_i},x_i)|}I_2(a,W_j).
\end{multline*}
By substituting from \eqref{eq-y2_a},  we get
\begin{align*}
\tiny
&\frac{1}{|f'(a,y_2(a),P_{r_i},x_i)|} \\
=&\frac{d_0+\left((x_{i_1}-a)^2+r_i^2-(x_{i_1}-a)^2\right)^{\frac{\beta}{2}}}{\beta
\sqrt{r_i^2-(x_{i_1}-a)^2} \left((x_{i_1}-a)^2+r_i^2-(x_{i_1}-a)^2
\right)^{\frac{\beta}{2}-1}} \\
=& \frac{d_0+r_i^{\beta}}{\beta \sqrt{r_i^2-(x_{i_1}-a)^2}}
\frac{1}{r_i^{\beta-2}} = \frac{d_0+r_i^{\beta}}{\beta r_i^{\beta-2}} \cdot
\frac{1}{\sqrt{r_i^2-(x_{i_1}-a)^2}}
\end{align*}
Let $\mathcal{C}_j=\{ x \in \real \textrm{ such that }
(x,y_{2,1}(x)) \in W_j\}$ and $\mathcal{C'}_j=\{ x \in \real
\textrm{ such that } (x,y_{2,2}(x)) \in W_j\}$. Note that
\begin{equation*}
x \in \mathcal{C}_j \Rightarrow I_1(x,W_j)=1 \textrm{ and } x \in
\mathcal{C'}_j \Rightarrow I_{2}(x,W_j)=1.
\end{equation*}
Then,
\begin{align} \label{eq-P_afixed-2sets} 
\tiny
p&(\ln P_{r_i}|y \in W_j) P(y \in W_j) = \frac{1}{A} \int_{\mathcal{C}_j \bigcup \mathcal{C'}_j} h(y_1,P_{r_i},x_i) dy_1 \nonumber\\
&=\frac{1}{A} \biggr(  \int_{\mathcal{C}_j} h(y_1,P_{r_i},x_i) dy_1 +
\int_{\mathcal{C'}_j}
h(y_1,P_{r_i},x_i) dy_1 \biggl)\nonumber \\
&= \frac{1}{A} \int_{\mathcal{C}_j} \frac{1}{|f'(y_1,y_{2,1}(y_1),P_{r_i},x_i)|}
dy_1 \nonumber \\
&+ \frac{1}{A} \int_{\mathcal{C'}_j}
\frac{1}{|f'(y_1,y_{2,2}(y_1),P_{r_i},x_i)|} dy_1.
\end{align}
Write
\vspace{-0.15in}
\begin{align}
\tiny
&\mathcal{C}_j =
\bigcup_{\alpha=1}^{s} A_{\alpha} \textrm{ , with } \bigcap_{\alpha}
A_{\alpha} = \emptyset\textrm{ , and } A_{\alpha} =
[a_{1_\alpha},a_{2_\alpha}] \label{eq-C_A1} ,\\
&\mathcal{C'}_j = \bigcup_{\alpha=1}^{s'} A'_{\alpha} \textrm{ , with
} \bigcap_{\alpha} A'_{\alpha} = \emptyset\textrm{ , and }
A'_{\alpha} = [a'_{1_\alpha},a'_{2_\alpha}]\label{eq-C_A2}.
\end{align}
Equation \eqref{eq-P_afixed-2sets} can then be written as the sum
\begin{align*}
\tiny
&A \cdot p(\ln P_{r_i}|y \in W_j) P(y \in W_j)=\\
&\sum_{\alpha=1}^{s}
\int_{A_{\alpha}} \frac{1}{|f'(y_1,y_{2,1}(y_1),P_{r_i},x_i)|}dy_1\\
&+\sum_{\alpha=1}^{s'} \int_{
A'_{\alpha}} \frac{1}{|f'(y_1,y_{2,2}(y_1),P_{r_i},x_i)|}y_1.
\end{align*}
\vspace{-0.2in}
\begin{multline*}
\tiny
= \sum_{\alpha=1}^{s} \int_{a_{2_\alpha}}^{a_{1_\alpha}}
\frac{1}{|f'(y_1,y_{2,1}(y_1),P_{r_i},x_i)|} dy_1\\
+\sum_{\alpha=1}^{s'}
\int_{a'_{1_\alpha}}^{a'_{2_\alpha}} \frac{1}{|f'(y_1,y_{2,2}(y_1),P_{r_i},x_i)|} \ dy_1,
\end{multline*}
\vspace{-0.2in}
\begin{multline*}
\tiny
= \sum_{\alpha=1}^{s} \int_{a_{2_\alpha}}^{a_{1_\alpha}}
\frac{d_0+r_i^{\beta}}{\beta r_i^{\beta-2}} \cdot
\frac{1}{\sqrt{r_i^2-(x_{i_1}-y_1)^2}} \ dy_1\\
+ \sum_{\alpha=1}^{s'} \int_{a'_{2_\alpha}}^{a'_{1_\alpha}}
\frac{d_0+r_i^{\beta}}{\beta r_i^{\beta-2}} \cdot
\frac{1}{\sqrt{r_i^2-(x_{i_1}-y_1)^2}} \ dy_1\\
= \sum_{\alpha=1}^{s}
\frac{d_0+r_i^{\beta}}{\beta r_i^{\beta-2}} \cdot \arctan \frac{x_{i_1}-a}{\sqrt{r_i^2-(x_{i_1}-y_1)^2}}|_{a_{1_\alpha}}^{a_{2_\alpha}}\\
+ \sum_{\alpha=1}^{s'} \frac{d_0+r_i^{\beta}}{\beta r_i^{\beta-2}}
\cdot \arctan
\frac{x_{i_1}-y_1}{\sqrt{r_i^2-(x_{i_1}-y_1)^2}}|_{a'_{1_\alpha}}^{a'_{2_\alpha}}
\end{multline*}
Note that 
\begin{multline*}
\tiny
\arctan\frac{x_{i_1}-a}{\sqrt{r_i^2-(x_{i_1}-a)^2}} \\=
\arctan\frac{x_{i_1}-a}{\sqrt{(x_{i_1}-a)^2+(x_{i_2}-y_2(a))^2-(x_{i_1}-a)^2}}\\
= \arctan\frac{x_{i_1}-a}{x_{i_2}-y_2(a)} = \frac{\pi}{2}
-\arctan\frac{x_{i_2}-y_2(a)}{x_{i_1}-a}.
\end{multline*}
The conditional probability density becomes after simplifications,
\hspace{-0.1in}
\begin{multline*}
\tiny
p(\ln P_{r_i}|y \in W_j) P(y \in W_j)= \frac{d_0+r_i^{\beta}}{A \beta
r^{\beta-2}} \cdot \sum_{\alpha=1}^{s} \\
\left(\frac{\pi}{2} -
\arctan\frac{x_{i_2}-y_2(a_{2_\alpha})}{x_{i_1}-a_{2_\alpha}} -
\frac{\pi}{2} + \arctan
\frac{x_{i_2}-y_2(a_{1_\alpha})}{x_{i_1}-a_{1_\alpha}} \right) \\
+ \frac{d_0+r_i^{\beta}}{A \beta r_i^{\beta-2}}
\sum_{\alpha=1}^{s'} \left(\frac{\pi}{2}- \arctan
\frac{x_{i_2}-y_2(a'_{2_\alpha})}{x_{i_1}-a'_{2_\alpha}} - \frac{\pi}{2} \right.\\
\left.  + \arctan \frac{x_{i_2}-y_2(a'_{1_\alpha})}{x_{i_1}-a'_{1_\alpha}} \right)= \frac{d_0+r_i^{\beta}}{A \beta r_i^{\beta-2}} \left(
\sum_{\alpha=1}^{s} \theta_{\alpha} + \sum_{\alpha=1}^{s'}
\theta'_{\alpha} \right),
\end{multline*}
where $\theta_{\alpha}$ and $\theta'_{\alpha}$ are the angles of the
arcs in $S(W_j,r_i,x_i)$ described on distinct supports as in
\eqref{eq-C_A1} and \eqref{eq-C_A2} when applicable.
\end{proof}
 \end{document}